\documentclass{amsart}
\usepackage{amssymb}
\usepackage{amsthm}
\usepackage{verbatim}
\usepackage{hyperref}
\usepackage{color}
\vfuzz2pt \hfuzz2pt
\newtheorem{thm}{Theorem}[section]
\newtheorem{cor}[thm]{Corollary}

\newtheorem{prop}[thm]{Proposition}
\newtheorem{exam}[thm]{Example}
\theoremstyle{definition}
\theoremstyle{remark}
\newtheorem{rem}[thm]{Remark}
\numberwithin{equation}{section}

\newcommand{\va}{\varphi}

\begin{document}

\title[subspace-hypercyclic conditional type operators  ]
{subspace-hypercyclic conditional type operators on $L^p$-spaces }

\author{\bf M. R. Azimi$^{*^1}$ and Z. Naghdi$^2$}
\address {M. R. Azimi}
\email{mhr.azimi@maragheh.ac.ir(for corresponding)}
\address{Z. Naqdi}
\email{z.nagdi1396@gmail.com}
\address{$1, 2$: department of mathematics, faculty of sciences,
university of maragheh, p. o. box: 55181-83111, Golshahr, maragheh,
iran}


\subjclass[2000]{Primary 47A16, 46E30.}

\keywords{Subspace-hypercyclic, Orbit, Subspace-weakly mixing,
Subspace-topologically mixing, Measurable transformation, Normal,
Radon-Nikodym derivative, Conditional expectation, aperiodic.}

\date{}

\dedicatory{}

\commby{}


\begin{abstract}
 A conditional weighted composition
 operator   $T_u: L^p(\Sigma)\rightarrow L^p(\mathcal{A})$ ($1\leq
p<\infty$),  is defined by $T_u(f):= E^{\mathcal{A}}(u f\circ
\varphi)$, where $\varphi: X\rightarrow X$ is a measurable
transformation, $u$ is a weight function on $X$ and
$E^{\mathcal{A}}$ is the conditional expectation operator with
respect to $\mathcal{A}$. In this paper, we study the
subspace-hypercyclicity of $T_u$ with respect to $L^p(\mathcal{A})$.
First, we show that if $\varphi$ is a periodic nonsingular
transformation, then $T_u$ is not $L^p(\mathcal{A})$-hypercyclic.
The necessary conditions for the subspace-hypercyclicity of $T_u$
are obtained when $\varphi$ is non-singular and finitely non-mixing.
For the sufficient conditions, the normality of  $\varphi$ is
required.
 The subspace-weakly mixing
and subspace-topologically mixing concepts are also studied for
$T_u$. Finally, we give an example which is subspace-hypercyclic
while is not hypercyclic.
\end{abstract}

\maketitle
\section{\textbf{Introduction and Preliminaries}}

 Suppose that $T$ is a bounded linear operator on a topological vector space
$X$. If there is a vector $x \in X$ such that the orbit $orb(T,
x):=\{T^n x : n=0, 1, 2,... \}$ is dense in $X$, then $T$ will be
hypercyclic and $x$ is called a hypercyclic vector. Here, $T^n$
stands for the $n$-th iterate of $T$ and $T^0$ is the identity map
$I$. Let $M$ be a closed and non-trivial subspace of $X$. An
operator $T$ is {\em subspace-hypercyclic} with respect to $M$
($M$-\emph{hypercyclic}), if there is a
   a vector $x\in X$   such that $orb(T, x)\cap M$
is dense in $M$. Also an operator $T$ is {\em subspace-transitive}
with respect to $M$, if for any non-empty open set $U, V\subseteq
M$, there exists an $n\in\mathbb{N}$ such that $T^{-n}(U)\cap V$
contains an open non-empty subset of $M$. An operator $T$ is {\em
subspace-topologically mixing} with respect to $M$, if for any
non-empty open set $U, V\subseteq M$, there exists an
$N\in\mathbb{N}$ such that $T^{-n}(U)\cap V$ contains an open
non-empty subset of $M$ for each $n \geq N$. It is called {\em
subspace-weakly mixing} if $T \oplus T$ is subspace-hypercyclic with
respect to  $M\oplus M$.

The study of subspace-hypercyclic linear operators  was initiated by
B. F. Madore and R. A. Mart\'{i}nez-Avenda\~{n}o \cite{mad}.
  They found out that
  subspace-hypercyclicity like as hypercyclicity,  can occur only on
  infinite-dimensional spaces and even subspaces. Also,  they
 proved an interesting Kitai's type  \emph{subspace-hypercyclicity
criterion} on a topological vector space as follows.\\
Assume that there exist $D_1$ and $D_2$, dense subsets of $M$, and
an increasing sequence of positive integers $(n_k)$ such that
\begin{itemize}
  \item $T^{n_k}x\to 0$ for all $x\in D_1$;
  \item for each $y\in D_2$, there exists a sequence $\{x_{k}\}$
 in $M$ such that $x_{k}\to 0$ and $T^{n_k}x_{k}\to y$;
  \item $M$ is an invariant subspace for $T^{n_k}$
  for all $k\in\mathbb{N}$.
\end{itemize}
Then $T$ is subspace-transitive and hence  is subspace-hypercyclic
\cite[Theorem 3.6]{mad}. But the converse is not true, see \cite{21,
rez2} for more details. Further, it is showed that the compact or
hyponormal operators are not subspace-hypercyclic.

For the dynamics of linear operators the  survey articles
\cite{sal}, \cite{be},  \cite{rez}, \cite{yus}, \cite{aba},
\cite{mad} and the books \cite{bay}, \cite{mang} are useful.

Let $(X,\Sigma,\mu)$ be a complete $\sigma$-finite measure space and
$\mathcal{A}$ is a $\sigma$-finite subalgebra of $\Sigma$. For each
$1\leq p < \infty$,  the Banach space
$L^p(X,\mathcal{A},\mu_{|_{\mathcal{A}}})$ is denoted by
$L^p(\mathcal{A})$ simply.  All comparisons between two functions or
two sets are to be interpreted as holding up to a $\mu$-null set.
The \emph{support} of any $\Sigma$-measurable function $f$ is
defined by $ \sigma(f)=\lbrace x\in X : f(x)\neq0 \rbrace$. The
\emph{characteristic function} of any set $A$ and  the class of all
$\mathcal{A}$-measurable and simple functions on $X$ with finite
supports will be denoted by $\chi_A$ and $S^{\mathcal{A}}(X)$,
respectively.

A $\Sigma$-measurable transformation $\va :X \rightarrow X$ is
called \emph{non-singular} whenever  $\mu \circ \va^{-1}$ is
absolutely continuous with respect to $\mu$, which is symbolically
shown by $\mu\circ\va^{-1}\ll\mu$. In this case, \emph{Radon-Nikodym
property} is denoted by $h:=\frac{d\mu\circ\varphi^{-1}}{d\mu}$.

A $\Sigma$-measurable transformation $\varphi: X\rightarrow X$ is
called \emph{periodic} if $\varphi^m=I$  for some $m\in \mathbb{N}$.
 It is called \emph{aperiodic}, if it
is not periodic. Also, if for each subset $F\in \Sigma$ with finite
measure, there exists an $N\in \mathbb{N}$ such that $F \cap
\varphi^n(F)=\emptyset$ for every $n>N$, then $\va$ is called
\emph{finitely non-mixing}.

 Set
$\Sigma_\infty=\bigcap_{n=1}^{\infty}\va^{-n}(\Sigma)$ and suppose
that $h$ is $\Sigma_\infty$-measurable. The assumption
$\mu\circ\va^{-1}\ll\mu$ implies that $\mu\circ\va^{-n}\ll\mu$ for
all $n\in\mathbb{N}$ and then
\begin{align*}
h_n:&=\frac{d\mu\circ\varphi^{-n}}{d\mu}=\frac{d\mu\circ\varphi^{-n}}
{d\mu\circ\varphi^{-(n-1)}}\cdots\frac{d\mu\circ\varphi^{-1}}{d\mu}\\&
=(h\circ\varphi^{-(n-1)})\cdots
(h\circ\varphi^0)=\prod_{i=0}^{n-1}h\circ\varphi^{-i}.
\end{align*}
Note that always $h\circ\va>0$ and $h_n=h^n$ whenever $h\circ\va=h$.
When it is restricted to a $\sigma$-subalgebra $\mathcal{A}$, is
denoted by
  $h_n^{\mathcal{A}}=\frac{d(\mu\circ
\varphi^{-n}|_{\mathcal{A}})}{d(\mu|_{\mathcal{A}})}$.

 The
\emph{change of variable formula}
$$\int_{\va^{-n}(A)}f\circ\va^n d\mu=\int_A h_nfd\mu, \ \ \ \ A\in\Sigma,
 \ f\in L^1(\Sigma),$$
 will be used frequently.

When $\va(\Sigma)\subseteq\Sigma$ and $\mu\circ\va\ll\mu$, then a
measure $\mu$ is called  \emph{normal} with respect to $\va$ and
 in this case $h^\sharp=\frac{d\mu\circ\varphi}{d\mu}$ is defined.
  Now, consider that
$$h^\sharp=(\frac{d\mu}{d\mu\circ\varphi})^{-1}=(\frac{d\mu\circ\varphi^{-1}}{d\mu}
\circ\va)^{-1}=\frac{1}{h\circ\va}$$ and
\begin{align*}
h_n^\sharp:&=\frac{d\mu\circ\varphi^{n}}{d\mu}=(h^\sharp\circ\varphi^{(n-1)})\cdots
(h^\sharp\circ\varphi^0)=\prod_{i=0}^{n-1}h^\sharp\circ\varphi^{i}
=\prod_{i=1}^{n}(h\circ\varphi^{i})^{-1},
\end{align*}
$h^\sharp_n\circ\varphi>0$, $h^\sharp_{n+1}=h^\sharp h^\sharp_n\circ
 \varphi$.\\
Let $1\leq p\leq\infty$. For any non-negative $\Sigma$-measurable
functions $f$ or for any  $f\in L^p(\Sigma)$,  Radon-Nikodym
Theorem, ensures the existence of  a unique ${\mathcal
A}$-measurable function $E^{\mathcal{A}}(f)$ such that
$$\int_{A}E^{\mathcal{A}}(f)d\mu=\int_{A}fd\mu, \ \  \ \ \ \mbox{for all} \ \
A\in {\mathcal A}.$$ A contractive projection $E^{\mathcal{A}}:
L^p(\Sigma)\rightarrow L^p({\mathcal A})$  is called a
\emph{conditional expectation operator} associated with the
$\sigma$-finite subalgebra ${\mathcal A}$.

Here, we list  some useful properties of the conditional expectation
operator:
\begin{itemize}
\item $E^{\mathcal{A}}(1)=1$;
  \item If $g$ is $\mathcal{A}$-measurable, then
 $E^{\mathcal{A}}(fg)=E^{\mathcal{A}}(f)g$;
  \item $|E^{\mathcal{A}}(f)|^p\le
  E^{\mathcal{A}}(|f|^p)$;
  \item For each $f\ge 0, \sigma(f)\subseteq
  \sigma(E^{\mathcal{A}}(f))$;
  \item Monotonicity: If $f$ and $g$ are real-valued with $f\le g$,
 then $E^{\mathcal{A}}f\le E^{\mathcal{A}}g$;
  \item For each $f\ge 0, E^{\mathcal{A}}(f)\ge 0$.
  \item $h_{n+1}=hE^{\va^{-1}(\Sigma)}(h_n)\circ
\varphi^{-1}=h_nE^{\va^{-n}(\Sigma)}(h)\circ\varphi^{-1}$\cite{hoo}.
\end{itemize}
A detailed information of the condition expectation operator may be
found in \cite{rao, rao2, her, l}.

A \emph{weighted composition operator} $uC_\va: L^p(\Sigma)
 \rightarrow L^p(\Sigma)$ defined by $f \mapsto u f\circ \va$
is bounded if and only if $J\in L^\infty(\Sigma)$, where
$J:=hE^{\mathcal{A}}(|u|^{p})\circ \varphi^{-1}$, and in this case
$\|uC_\va\|^p=\|J\|_\infty$ (see \cite{hoo, sin, azz}).

Now, we are ready to define a
$\emph{conditional weighted composition operator}$ $T_u$ by:\\
$$T_u : L^{p}(\Sigma)\rightarrow
L^p(\mathcal{A})$$
$$T_uf:=E^{\mathcal{A}}\circ uC_\va (f)= E^{\mathcal{A}}(u f \circ \va).$$
For the fundamental properties of the conditional type operators,
the reader is refereed to  \cite{pag1, pag2, es1, es2}.

 The hypercyclicity of the well-known operators such
as weighted shifts, weighted translations, conditional weighted
translations and weighted composition operators in different
settings has been studied in \cite{sal, chen, dar, az, az2, az3,
aba, be, yus}.

Separability and infinite-dimension are two essential objects for
the underlying space to admit a hypercyclic vector \cite{bay, mang}.
To that end, it is important to know  that $L^{p}(X,\Sigma,\mu)$ is
separable if and only if $(X, \Sigma, \mu)$ is separable, i.e.,
there exists a countable $\sigma$-subalgebra $\mathcal{F}\subseteq
\Sigma$  such that for each $\epsilon>0$ and $A\in \Sigma$ we have
$\mu(A\Delta B)<\epsilon$ for some $B\in \mathcal{F}$.  For more
details consult \cite{rao2}.

 In this paper, we will survey the dynamics of a
conditional weighted composition operator
$T_u=E^{\mathcal{A}}(uf\circ\va)$ on $L^p(\Sigma)$ spaces. First, we
prove that $T_u$  cannot  be $L^p(\mathcal{A})$-hypercyclic if
$\varphi$ is a periodic non-singular transformation. In addition,
the necessary conditions for the subspace-hypercyclicity of $T_u$
are then given provided that $\varphi$ is non-singular and finitely
non-mixing. For the sufficient conditions, we also require that
$\varphi$ is normal.
 The subspace-weakly mixing
and subspace-topologically mixing concepts are also studied for
$T_u$. At the end, about what we argued, an examples is given.

\section{\textbf{Subspace-hypercyclicity of $T_u$
 On $L^p(\Sigma)$ }}

 In this section,  the $L^p(\mathcal{A})$-hypercyclicity
  of a conditional weighted
composition operator $T_u$ is studied. When
 $\varphi$ is periodic transformation, it is seen that $T_u$ is not
 $L^p(\mathcal{A})$-hypercyclic. But, when it is aperiodic, by  Kitai's
 subspace-hypercyclicity criterion
 we obtain some necessary and then sufficient conditions for $T_u$ to be
 subspace-hypercyclic.  We are thankful to the techniques used
in \cite{chen, sal}.

\begin{thm}
Let $\varphi$ be a periodic \emph{non-singular} transformation and
$\va^{-1}\mathcal{A}\subseteq \mathcal{A}$. Then a conditional
weighted composition operator $T_u: L^{p}(\Sigma)\rightarrow
L^p(\mathcal{A})$ is not subspace-hypercyclic with respect to
$L^p(\mathcal{A})$, for each $1\le p<\infty$.
\end{thm}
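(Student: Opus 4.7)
The plan is to reduce the subspace-hypercyclicity of $T_u$ on $L^p(\Sigma)$ to the ordinary hypercyclicity of a related operator on $L^p(\A)$ and then exploit the periodicity of $\va$ to expose a multiplication-operator structure. First I would observe that, since $T_u$ takes values in $L^p(\A)$, every iterate $T_u^n f$ with $n\ge 1$ already lies in $L^p(\A)$, regardless of whether $f\in L^p(\A)$. Hence if $T_u$ were $L^p(\A)$-subspace-hypercyclic with vector $f$, then setting $g:=T_u f$, the set $\{\tilde T^k g:k\ge 0\}$, where $\tilde T:=T_u|_{L^p(\A)}$, would be dense in $L^p(\A)$; in other words, $\tilde T$ would be hypercyclic on $L^p(\A)$.

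Next I would use the hypothesis $\va^{-1}\A\subseteq\A$ to simplify $\tilde T$. This inclusion forces $h\circ\va$ to be $\A$-measurable whenever $h$ is, so the pull-out identity $E^{\A}(u\,h\circ\va)=(h\circ\va)E^{\A}(u)$ (listed among the basic properties of $E^{\A}$) gives $\tilde Th=(h\circ\va)w$, with $w:=E^{\A}(u)$. An easy induction then yields $\tilde T^k h=(h\circ\va^k)\prod_{i=0}^{k-1}(w\circ\va^i)$, and at $k=m$ the periodicity $\va^m=I$ collapses this to $\tilde T^m h=Ph$, where $P:=\prod_{i=0}^{m-1}(w\circ\va^i)\in L^\infty(\A)$. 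Thus $\tilde T^m$ is the bounded multiplication operator $M_P$ on $L^p(\A)$.

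The contradiction would then follow from two classical facts. By Ansari's theorem, hypercyclicity of $\tilde T$ passes to every power, so $\tilde T^m=M_P$ would have to be hypercyclic on $L^p(\A)$. But multiplication operators on $L^p$ are never hypercyclic: if $gP^{k_j}\to h$ in $L^p(\A)$ along some subsequence, extracting an a.e.\ convergent sub-subsequence forces $g=0$ a.e.\ on $\{|P|>1\}$ (otherwise $|gP^{k_j}|\to\infty$ there) and $h=0$ a.e.\ on $\{|P|<1\}$ (since $P^{k_j}\to 0$ there); letting $h$ vary over a dense family in $L^p(\A)$ squeezes $g$ down to $0$ almost everywhere, which contradicts the very hypercyclicity of $g$. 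I expect the main obstacle to be precisely this final step: the two computational reductions (restricting to $L^p(\A)$ and collapsing $\tilde T^m$ to $M_P$) are routine, but for $p\ne 2$ one cannot invoke the Hilbert-space "normal $\Rightarrow$ not hypercyclic" argument, and the non-hypercyclicity of $M_P$ must be established by the pointwise blow-up/decay analysis just sketched, with some care over the choice of targets $h\in L^p(\A)$ needed to squeeze $g$ to zero.
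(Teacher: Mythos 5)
Your argument is correct, but it takes a genuinely different route from the paper's. You first observe that $T_u$ maps all of $L^p(\Sigma)$ into $L^p(\A)$, so every orbit point past the zeroth lies in the subspace; hence subspace-hypercyclicity upgrades to honest hypercyclicity of the restriction $\tilde T=T_u|_{L^p(\A)}$ with vector $T_uf$. You then use $\va^{-1}\A\subseteq\A$ and periodicity to identify $\tilde T^m$ with the multiplication operator $M_P$, $P=\prod_{i=0}^{m-1}E^{\A}(u)\circ\va^i$, invoke Ansari's theorem to pass hypercyclicity from $\tilde T$ to $\tilde T^m$, and rule out hypercyclic multiplication operators by the pointwise blow-up/decay analysis. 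The paper instead works directly with the orbit and splits into two cases according to whether $\|P\|_\infty\le 1$ or $\|P\|_\infty>1$: in the first case the orbit is norm-bounded, hence not dense; in the second it selects $F\in\A$ with $|P|>1$ on $F$ and derives a contradiction from a hypercyclic vector approximating $2\chi_F$ while $\|(T_u^{m})^n f\|_p$ is small. Your route is more conceptual and sidesteps the paper's $\varepsilon$-bookkeeping, at the price of importing Ansari's theorem (which is in the paper's bibliography) as a black box; the paper's argument is self-contained but computational. To make your version airtight you should record three small points: (a) a dense subset of $L^p(\A)$ minus the single point $f$ is still dense, so $\tilde T$ really is hypercyclic; (b) $P\in L^\infty(\A)$ because $\tilde T^m=M_P$ is bounded on $L^p$ of a $\sigma$-finite measure; and (c) in the last step, once $g=0$ a.e.\ on $\{|P|>1\}$, the whole orbit of $g$ under $M_P$ is dominated pointwise by $|g|$, hence so is every $L^p$-limit of orbit elements, which already precludes density without any further case analysis on $\{|P|=1\}$.
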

\begin{proof}
Suppose that there exists an $m\in \mathbb{N}$ such that
$\varphi^m=I$. Since $\va^{-1}\mathcal{A}\subseteq \mathcal{A}$, the
orbit of $T_u$ at each $f\in L^{p}(\Sigma)$ is written as follows:
\begin{align*}
& orb(T_u, f)=\{f, T_u f, \cdots , T_u^{m}f\}\cup\{T_u^{m+1}f, T_u^{m+2}f,
\cdots , T_u^{2m}f\} \cup\cdots \\
& \cup\{T_u^{km+1}f, T_u^{km+2}f, \cdots ,
T_u^{(k+1)m}f\}\cup\cdots\\
& =\big\{f, E^{\mathcal{A}}(u f\circ \varphi),
E^{\mathcal{A}}(u)E^{\mathcal{A}}(u f\circ \varphi)\circ
\varphi,\cdots, \prod_{i=0}^{m-2}E^{\mathcal{A}}(u)\circ\va^{i}
E^{\mathcal{A}}(u f\circ \va)\circ\va^{m-1}\big\}\\
&\cup \big\{\prod_{i=0}^{m-1}E^{\mathcal{A}}(u)\circ\va^{i} E^{\mathcal{A}}(u f\circ
\varphi), \prod_{i=0}^{m-1}E^{\mathcal{A}}(u)\circ\va^{i} E^{\mathcal{A}}(u)E^{\mathcal{A}}(u
f\circ
\varphi)\circ \varphi, \cdots,\\
&  \prod_{i=0}^{m-1}E^{\mathcal{A}}(u)\circ\va^{i}
\prod_{i=0}^{m-2}E^{\mathcal{A}}(u)\circ\va^{i}E^{\mathcal{A}}(u f\circ
\va)\circ\va^{m-1}\big\}\\
&\cup \big\{(\prod_{i=0}^{m-1}E^{\mathcal{A}}(u)\circ\va^{i})^2 E^{\mathcal{A}}(u f\circ
\varphi), (\prod_{i=0}^{m-1}E^{\mathcal{A}}(u)\circ\va^{i})^2
E^{\mathcal{A}}(u)E^{\mathcal{A}}(u f\circ
\varphi)\circ \varphi, \cdots,\\
&  (\prod_{i=0}^{m-1}E^{\mathcal{A}}(u)\circ\va^{i})^2
\prod_{i=0}^{m-2}E^{\mathcal{A}}(u)\circ\va^{i}E^{\mathcal{A}}(u f\circ
\va)\circ\va^{m-1}   \big\} \cup\\
& \vdots
\end{align*}
 Now we consider that  $\|\prod_{i=0}^{m-1}E^{\mathcal{A}}(u)
 \circ\va^{i}\|_\infty\leq 1$. Since
$\|T_u\|\leq \|J\|_{\infty}^{1/p}$, $\|T_u^n\| \leq\|T_u\|^n \leq
\|J\|_\infty^{n/p}$, and for each $n\in\mathbb{N}$ we have
\begin{align*}
\|T_u^n f\|_p &\leq \max\{\|f\|_p, \|E^{\mathcal{A}}(u f\circ
\va)\|_p, \|E^{\mathcal{A}}(u) E^{\mathcal{A}}(u f\circ
\va)\circ\va\|_p,\cdots,\\
& \|\prod_{i=0}^{m-2}E^{\mathcal{A}}(u)\circ\va^{i} E^{\mathcal{A}}(u f\circ
\va)\circ\va^{m-1}\|_p\}\\& \leq \|f\|_p \max\{1, \|J\|_\infty
^{\frac{1}{p}}, \|J\|_{\infty}^{\frac{2}{p}}, \cdots,
\|J\|_\infty^{\frac{m-1}{p}} \}.
\end{align*}
 Therefore, $orb(T_u, f)$ is a bounded subset
  and  cannot be dense in $L^p(\mathcal{A})$.

  Iv the second case
  $\|\prod_{i=0}^{m-1}E^{\mathcal{A}}(u)\circ\va^{i}\|_{\infty}>1$,
 assume that $T_u$ is subspace-hypercyclic with respect to $L^p(\mathcal{A})$.
Then there exists a  subset $F\in \mathcal{A}$ with $0<\mu(F)<
\infty$ for each $\varepsilon>0$, such that
$|\prod_{i=0}^{m-1}E^{\mathcal{A}}(u)\circ\va^{i}|>1$. Then there is
a subspace-hypercyclic vector $f\in L^{p}(\mathcal{A})$ and $n\in
\mathbb{N}$ such that
$$\|f- 2 \chi_F\|_p < \varepsilon \quad \mbox{and} \quad
\|(T_u^{m+1})^n f\|_p<\varepsilon.$$  We set $S=\{t\in F: |f(t)|<1
\}$ and note that  $\chi_S\leq \chi_S|f-2|\leq\chi_S|f-2\chi_F|$.
Thus,  $\mu(S)< \varepsilon^p$. On the other hand,
\begin{align*}
  \varepsilon^p>\|(T_u^{m})^n
f)\|_p^p&= \int_X |\prod_{i=0}^{mn-1}E^{\mathcal{A}}(u)\circ\va^{i}f\circ\va^{mn}|^p d\mu\\
&= \int_X |\prod_{i=0}^{m-1}E^{\mathcal{A}}(u)\circ\va^{i}|^{np}|f|^p d\mu
  \geq \int_{F-S} | f|^p d\mu
  \geq \mu(\chi_{F-S}).
\end{align*}
Therefore, $\mu(F)=\mu(S)+ \mu(F-S)<2\varepsilon^p$, which is a
contradiction.
\end{proof}
\begin{rem}
If $\varphi$ is a periodic \emph{non-singular} transformation,
$\va^{-1}\mathcal{A}\subseteq \mathcal{A}$ and $u=1$, then a
conditional  composition operator $T_uf=E^{\mathcal{A}}(f\circ \va)$
is not subspace-hypercyclic with respect to $L^p(\mathcal{A})$
either. Since its orbit at $f\in L^{p}(\Sigma)$ i.e.,
$orb(T_u,f)=\{f, E^{\mathcal{A}}(f\circ \varphi),
E^{\mathcal{A}}(f\circ \varphi)\circ \va, E^{\mathcal{A}}(f\circ
\varphi)\circ \va^2 \cdots, E^{\mathcal{A}}(f\circ
\varphi)\circ\va^{m-1}\}$ is a bounded subset. Indeed,
$$\|T_u^n f\|_p \leq \|f\|_p \max\{1, \|h\|_\infty ^{\frac{1}{p}},
\|h\|_{\infty}^{\frac{2}{p}}, \cdots, \|h\|_\infty^{\frac{m-1}{p}}
\}.$$
\end{rem}
\begin{cor}
Suppose that $\mathcal{A}=\va^{-1}\Sigma$ and $\varphi$ is a
periodic \emph{non-singular} transformation. Then $$orb(T_u,
f)=\big\{f, E^{\va^{-1}\Sigma}(u)f\circ \varphi,
E^{\va^{-1}\Sigma}(u)E^{\va^{-1}\Sigma}(u)\circ \varphi f\circ
\varphi^2,\cdots, \prod_{i=0}^{m-1}E^{\va^{-1}\Sigma}(u)\circ\va^{i} f\big\}$$ and hence
$T_u$ is not subspace-hypercyclic with respect to
$L^p(\va^{-1}\Sigma)$, for each $1\le p<\infty$.
\end{cor}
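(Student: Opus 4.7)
The plan is to derive the orbit formula by a direct induction that exploits $\mathcal{A}=\varphi^{-1}\Sigma$, and then invoke the preceding theorem for the non-hypercyclicity conclusion. First, I would verify that the hypothesis $\varphi^{-1}\mathcal{A}\subseteq\mathcal{A}$ of the previous theorem is automatic in this setting: since $\mathcal{A}=\varphi^{-1}\Sigma$, measurability of $\varphi$ gives $\varphi^{-1}\mathcal{A}=\varphi^{-2}\Sigma\subseteq\varphi^{-1}\Sigma=\mathcal{A}$, and by iteration $\varphi^{-i}\mathcal{A}\subseteq\mathcal{A}$ for every $i\ge 1$. In particular $E^{\mathcal{A}}(u)\circ\varphi^i$ is $\mathcal{A}$-measurable for every $i\ge 0$.

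Second, I would compute $T_u^n f$ explicitly. Because $f\circ\varphi$ is $\varphi^{-1}\Sigma$-measurable, it is $\mathcal{A}$-measurable, so the pull-out property of the conditional expectation yields
$$T_u f=E^{\mathcal{A}}(u\,f\circ\varphi)=E^{\mathcal{A}}(u)\,f\circ\varphi.$$
Applying $T_u$ again and using that $E^{\mathcal{A}}(u)\circ\varphi$ as well as $f\circ\varphi^2$ are $\mathcal{A}$-measurable, the same pull-out argument gives $T_u^2 f=E^{\mathcal{A}}(u)\,E^{\mathcal{A}}(u)\circ\varphi\,f\circ\varphi^2$. A straightforward induction then produces
$$T_u^n f=\Bigl(\prod_{i=0}^{n-1}E^{\mathcal{A}}(u)\circ\varphi^i\Bigr)f\circ\varphi^n$$
for every $n\ge 1$.

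Third, specializing to $n=0,1,\dots,m$ and using $\varphi^m=I$ (so $f\circ\varphi^m=f$) gives exactly the list of elements displayed in the statement; the orbit then continues by cycling the composition factor $f\circ\varphi^n$ through $f,f\circ\varphi,\dots,f\circ\varphi^{m-1}$ while the multiplicative factor accumulates further powers of $\prod_{i=0}^{m-1}E^{\mathcal{A}}(u)\circ\varphi^i$.

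Finally, since all hypotheses of the preceding theorem hold ($\varphi$ is periodic and non-singular, and $\varphi^{-1}\mathcal{A}\subseteq\mathcal{A}$), the theorem applies and shows that $T_u$ is not subspace-hypercyclic with respect to $L^p(\varphi^{-1}\Sigma)$. I do not anticipate a serious obstacle; the only point that must be handled carefully is the inductive step, where the chain $\varphi^{-i}\mathcal{A}\subseteq\mathcal{A}$ is needed to ensure that at each stage the accumulated factor $\prod_{i=0}^{n-1}E^{\mathcal{A}}(u)\circ\varphi^i$ together with $f\circ\varphi^n$ is $\mathcal{A}$-measurable, so that the pull-out property can be re-applied.
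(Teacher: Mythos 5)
Your proposal is correct and follows essentially the same route the paper intends: the corollary is the specialization of the first theorem to $\mathcal{A}=\varphi^{-1}\Sigma$, where the pull-out property applies to $f\circ\varphi$ (which is now $\mathcal{A}$-measurable) and yields the displayed closed form $T_u^n f=\bigl(\prod_{i=0}^{n-1}E^{\mathcal{A}}(u)\circ\varphi^i\bigr)f\circ\varphi^n$, after which the theorem's hypotheses (including $\varphi^{-1}\mathcal{A}=\varphi^{-2}\Sigma\subseteq\varphi^{-1}\Sigma$) are verified exactly as you do. Your observation that the displayed set really only lists the first $m+1$ iterates, with the orbit continuing by cycling, matches the block decomposition used in the paper's proof of that theorem.
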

\begin{thm}\label{T1}
Let $\varphi: X\rightarrow X$ be  a non-singular and finitely
non-mixing transformation and $\va^{-1}\mathcal{A}\subseteq
\mathcal{A}$. Suppose that  $T_u: L^{p}(\Sigma)\rightarrow
L^p(\mathcal{A})$ is subspace-hypercyclic with respect to
$L^p(\mathcal{A})$. Then
 for each  subset $F\in \mathcal{A}$ with
  $0<\mu(F)< \infty$,
there exists a sequence of $\mathcal{A}$-measurable sets
$\{V_k\}\subseteq F$ such that $\mu(V_k)\rightarrow \mu(F)$ as
$k\rightarrow\infty$, and there is a sequence of integers $(n_k)$
such that $$\lim_{k\rightarrow
\infty}\|(\prod_{i=0}^{{n_k}-1}E^{\mathcal{A}}(u)\circ\va^{i})^{-1}|_{V_k}\|_{\infty}
=0$$
  and $$\lim_{k\rightarrow
\infty}\|\sqrt[p]{h^{\mathcal{A}}_{n_k}}
[E^{\va^{-n_k}(\mathcal{A})}(\prod_{i=0}^{{n_k}-1}
  E^{\mathcal{A}}(u)\circ\va^{i})]\circ \va^{-n_k}|_
  {V_k}\|_{\infty}=0.$$
\end{thm}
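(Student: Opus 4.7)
The plan is to convert the density of the orbit of a subspace-hypercyclic vector in $L^p(\mathcal{A})$ into two $L^p$-limits involving the weight $P_n := \prod_{i=0}^{n-1} E^{\mathcal{A}}(u)\circ\varphi^i$, and then pass to $L^\infty$-control on a large-measure subset of $F$ via Chebyshev's inequality. First I reduce to the case that the hypercyclic vector $f$ lies in $L^p(\mathcal{A})$: since $T_u$ maps $L^p(\Sigma)$ into $L^p(\mathcal{A})$, $T_u f$ already belongs to $L^p(\mathcal{A})$ and its orbit $\{T_u^{n+1}f\}_{n\ge 0}$ is still dense in $L^p(\mathcal{A})$. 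For any $g\in L^p(\mathcal{A})$, the hypothesis $\varphi^{-1}\mathcal{A}\subseteq\mathcal{A}$ makes $g\circ\varphi$ $\mathcal{A}$-measurable, so a one-step induction using the pull-out property of $E^{\mathcal{A}}$ yields the master identity
\[T_u^n g \,=\, P_n\cdot g\circ\varphi^n, \qquad g\in L^p(\mathcal{A}),\]
from which every estimate will flow.

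For the second limit, fix $F\in\mathcal{A}$ with $0<\mu(F)<\infty$ and a vanishing sequence $\varepsilon_k$. Density produces $m_k<m_k'$ with $\|T_u^{m_k}f-2\chi_F\|_p<\varepsilon_k$ and $\|T_u^{m_k'}f\|_p<\varepsilon_k$; setting $g_k:=T_u^{m_k}f$ and $n_k:=m_k'-m_k$, I obtain $g_k\to 2\chi_F$ and $T_u^{n_k}g_k\to 0$ in $L^p(\mathcal{A})$. Then
\[\|T_u^{n_k}g_k\|_p^p=\int |P_{n_k}|^p\,|g_k|^p\circ\varphi^{n_k}\,d\mu \to 0.\]
Because $|g_k|^p\circ\varphi^{n_k}$ is $\varphi^{-n_k}\mathcal{A}$-measurable, I may replace $|P_{n_k}|^p$ by $E^{\varphi^{-n_k}\mathcal{A}}(|P_{n_k}|^p)$, apply the change-of-variables formula to relocate the integral onto $X$ against $h^{\mathcal{A}}_{n_k}$, and then use $|g_k|^p\to 2^p\chi_F$ in $L^1$ together with Jensen's inequality $|E^{\varphi^{-n_k}\mathcal{A}}(P_{n_k})|^p\le E^{\varphi^{-n_k}\mathcal{A}}(|P_{n_k}|^p)$ to deduce
\[\int_F \Bigl|\sqrt[p]{h^{\mathcal{A}}_{n_k}}\,\bigl[E^{\varphi^{-n_k}\mathcal{A}}(P_{n_k})\bigr]\circ\varphi^{-n_k}\Bigr|^p d\mu \,\to\, 0.\]
Chebyshev's inequality then carves out $V_k\subseteq F$ with $\mu(V_k)\to\mu(F)$ on which the same quantity tends to $0$ in $L^\infty$.

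For the first limit I run the dual argument, swapping the roles of $0$ and $2\chi_F$: pick $r_k<s_k$ with $\|T_u^{r_k}f\|_p<\varepsilon_k$ and $\|T_u^{s_k}f-2\chi_F\|_p<\varepsilon_k$, set $h_k:=T_u^{r_k}f\to 0$ and $\ell_k:=s_k-r_k$, so the master identity gives $P_{\ell_k}\,h_k\circ\varphi^{\ell_k}\to 2\chi_F$. On $F$ the product tends to $2$, so $|P_{\ell_k}^{-1}|$ and $|h_k\circ\varphi^{\ell_k}|$ are asymptotically proportional. The finitely non-mixing hypothesis gives $F\cap\varphi^{\ell_k}(F)=\emptyset$ for large $\ell_k$; since $x\in F\cap\varphi^{-\ell_k}(F)$ would imply $\varphi^{\ell_k}(x)\in F\cap\varphi^{\ell_k}(F)$, the same emptiness holds for $F\cap\varphi^{-\ell_k}(F)$. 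Change of variables then transfers the $L^p$-smallness of $h_k$ into $L^p(F)$-smallness of $P_{\ell_k}^{-1}$, and one more Chebyshev step yields a large-measure $V_k'\subseteq F$ on which $|P_{\ell_k}^{-1}|\to 0$ in $L^\infty$.

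The main obstacle is matching up the integer sequences: the theorem demands one common sequence $(n_k)$ and one common $V_k$ serving both limits. My plan is a diagonal selection—density of the orbit ensures the neighborhoods of both $0$ and $2\chi_F$ are visited infinitely often, so I can pick a common subsequence $n_k\to\infty$ along which both approximations hold simultaneously, and then take $V_k$ as the intersection of the two Chebyshev-produced subsets of $F$. The delicate point is the change-of-variables step: the finitely non-mixing hypothesis is the essential tool ensuring that all integrals localise correctly on $F$ rather than on $\varphi^{\mp n_k}(F)$, and verifying that the same choice of $n_k$ yields both $L^p$-limits is where I expect most of the technical work.
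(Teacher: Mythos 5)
Your overall strategy (turn orbit density into $L^p$-estimates on the weight $P_n=\prod_{i=0}^{n-1}E^{\mathcal{A}}(u)\circ\varphi^{i}$, then use Chebyshev to pass to an $L^\infty$-bound on a large subset $V_k$ of $F$) is the right one, and your master identity $T_u^n g=P_n\, g\circ\varphi^n$ for $g\in L^p(\mathcal{A})$ agrees with what the paper uses. The proof breaks, however, at the point you yourself flag as "the main obstacle." You derive the second limit from a pair $(m_k,m_k')$ with $T_u^{m_k}f\approx 2\chi_F$ and $T_u^{m_k'}f\approx 0$, producing a power $n_k=m_k'-m_k$, and the first limit from an entirely independent pair $(r_k,s_k)$, producing a power $\ell_k=s_k-r_k$. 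The theorem requires one sequence of powers and one sequence $\{V_k\}$ satisfying both norm conditions simultaneously, and nothing forces $n_k=\ell_k$: the times at which the orbit visits a neighborhood of $0$ and a neighborhood of $2\chi_F$ are unrelated, so the two families of differences may be disjoint and no "diagonal selection" or common subsequence exists. Intersecting the two Chebyshev sets does not help either, since each exceptional set is tied to its own power. This is a genuine gap, not a technical loose end.

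The paper avoids the issue by targeting the single function $\chi_F$ rather than the pair $(0,2\chi_F)$: density of the set of subspace-hypercyclic vectors gives a hypercyclic $f$ with $\|f-\chi_F\|_p<\varepsilon_1^2$, and then a single $m>N$ with $\|T_u^{m}f-\chi_F\|_p<\varepsilon_1^2$. Because $F\cap\varphi^{m}(F)=\emptyset$, the one function $\chi_F$ plays both roles at once: on $F$ one has $f\approx 1$ while $f\circ\varphi^{m}\approx 0$ (as $\varphi^{m}(t)\notin F$ for $t\in F$), so $P_m\,f\circ\varphi^{m}\approx 1$ forces $|P_m^{-1}|<\varepsilon_1/(1-\varepsilon_1)$ on most of $F$, which is the first limit; and on $\varphi^{-m}(F)$ one has $T_u^{m}f\approx\chi_F=0$ while $f\approx 1$ on $F$, which after the conditional-Jensen step and the change of variables yields the smallness of $\sqrt[p]{h^{\mathcal{A}}_{m}}\,[E^{\varphi^{-m}(\mathcal{A})}(P_m)]\circ\varphi^{-m}$ on most of $F$, the second limit. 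Both estimates thus come from the same $(f,m)$, which is exactly what the statement demands; if you want to salvage your argument, replacing the two targets $0$ and $2\chi_F$ by the single target $\chi_F$ is the repair. A secondary concern with your version of the first limit: converting $\|h_k\|_p\to 0$ into smallness of $h_k\circ\varphi^{\ell_k}$ on $F$ pulls the exceptional set back through $\varphi^{-\ell_k}$ and so costs a factor of the Radon--Nikodym derivative $h_{\ell_k}$, which is not assumed bounded here.
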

\begin{proof}

 Let $F\in \mathcal{A}$  be an arbitrary
 set with $0<\mu(F)< \infty$ and let $\varepsilon>0$ be an arbitrary.
 A transformation $\varphi$ is   finitely
non-mixing and hence, there is  an $N\in \mathbb{N}$ such that
$F\cap \varphi^n(F)=\emptyset$ for each $n> N$. Choose
$\varepsilon_1$ such that
$0<\varepsilon_1<\frac{\varepsilon}{1+\varepsilon}$. Since the set
of all subspace-hypercyclic vectors for $T_u$, is dense  in
$L^p(\mathcal{A})$, there exist a subspace-hypercyclic vector $f\in
L^{p}(\mathcal{A})$ and $m\in \mathbb{N}$ with $m>N$ such that
$$\|f- \chi_F\|_p<\varepsilon_1^2 \quad \mbox{and} \quad
\|T_u^{m}f - \chi_F\|_p<\varepsilon_1^2.$$   Put
$P_{\varepsilon_1}=\{t\in F: |f(t)-1|\geq \varepsilon_1 \}$ and
$R_{\varepsilon_1}=\{t\in X-F: |f(t)|\geq \varepsilon_1\}$. Then we
have
\begin{eqnarray*}
  \varepsilon_1^{2p} &>& \|f- \chi_F\|_p^p=\int_X |f- \chi_F|^p d\mu \\
  &\geq& \int_{P_{\varepsilon_1}} |f(x)- 1|^p d\mu(x)+\int_{R_{\varepsilon_1}}|f(x)|^p d\mu(x) \\
   &\geq& \varepsilon_1^p(\mu(P_{\varepsilon_1})+\mu(R_{\varepsilon_1})).
\end{eqnarray*}
Then, $\max\{\mu(P_{\varepsilon_1}),
\mu(R_{\varepsilon_1})\}<\varepsilon_1^p$.
 Set $S_{m, \varepsilon_1}=\{t\in F:
|\prod_{i=0}^{m-1}E^{\mathcal{A}}(u)\circ\va^{i}
f\circ\va^{m}(t)-1|\geq \varepsilon_1\}$ and now consider the
following relationships:
\begin{eqnarray*}
  \varepsilon_1^{2p} &>& \|T_u^{m}f - \chi_F\|_p^p\\
      &=& \int_{X} | \prod_{i=0}^{m-2}E^{\mathcal{A}}(u)\circ\va^{i}E^{\mathcal{A}}(u f\circ
\va)\circ\va^{m-1} - \chi_F|^p d\mu\\
   &\geq& \int_{S_{m, \varepsilon_1}}
   |\prod_{i=0}^{m-2}E^{\mathcal{A}}(u)\circ\va^{i}E^{\mathcal{A}}(u f\circ
\va)\circ\va^{m-1}(t)-1|^p d\mu(t)\\
&\geq& \int_{S_{m, \varepsilon_1}}
   |\prod_{i=0}^{m-1}E^{\mathcal{A}}(u)\circ\va^{i} f\circ\va^{m}(t)-1|^p d\mu(t)\\
&\geq& \varepsilon_1^p \mu(S_{m, \varepsilon_1})
  \end{eqnarray*}
  to deduce that $\mu(S_{m,
  \varepsilon_1})<\varepsilon_1^p$. But for an arbitrary $t\in F$,
  it is readily seen
  that $\varphi^{m}(t) \notin F$ because of $F\cap \varphi^{-m}(F)=\emptyset$.
  Hence, for each $t\in F-(S_{m, \varepsilon_1}\cup
   \varphi^{-m}(R_{\varepsilon_1}))$, we have
  $$|(\prod_{i=0}^{m-1}E^{\mathcal{A}}(u)\circ\va^{i})^{-1}(t)|<\frac{|f\circ
   \varphi^{m}(t)|}{1-\varepsilon_1}<
  \frac{\varepsilon_1}{1-\varepsilon_1}<\varepsilon.$$
 Now, let $U_{m,\varepsilon_1}=\va^{-m}(\{t\in F: \sqrt[p]{h^{\mathcal{A}}_{m}(t)}\ |
 E^{\va^{-m}(\mathcal{A})}(\prod_{i=0}^{m-1}E^{\mathcal{A}}(u)\circ\va^{i})\circ
  \va^{-m}(t)f(t)|\geq \varepsilon_1\})$. Here, we remind that $\prod_{i=0}^{m-1}E^{\mathcal{A}}(u)\circ\va^{i}
       \circ\varphi^{-m}=\prod_{i=1}^{m}E^{\mathcal{A}}(u)\circ\va^{-i}$ on
       $\sigma(h_m^{\mathcal{A}})$.
 Use the change of variable formula  to obtain that
\begin{eqnarray*}
  \varepsilon_1^{2p} &>& \|T_u^{m}f - \chi_F\|_p^p\\
      &=& \int_{X} |\prod_{i=0}^{m-1}E^{\mathcal{A}}(u)\circ\va^{i}
       f\circ\va^{m} - \chi_F|^p d\mu\\
      &\geq& \int_{X} |E^{\va^{-m}(\mathcal{A})}(\prod_{i=0}^{m-1}
      E^{\mathcal{A}}(u)\circ\va^{i}) f\circ \varphi^{m}
       - E^{\va^{-m}(\mathcal{A})}(\chi_F)|^p d\mu\\
       &\geq& \int_{U_{m, \varepsilon_1}} |E^{\va^{-m}(\mathcal{A})}
       (\prod_{i=0}^{m-1}E^{\mathcal{A}}(u)\circ\va^{i})
        f\circ   \varphi^{m} |^p d\mu\\
       &\geq& \int_{\va^m(U_{m, \varepsilon_1})}
       |E^{\va^{-m}(\mathcal{A})}(\prod_{i=0}^{m-1}E^{\mathcal{A}}(u)\circ\va^{i})
       \circ\varphi^{-m} f |^p h_m^{\mathcal{A}}d\mu\\
   &\geq& \varepsilon_1^p \mu(\va^m(U_{m, \varepsilon_1})),
  \end{eqnarray*}
  which implies in turn  that $\mu(\va^m(U_{m,
  \varepsilon_1}))<\varepsilon_1^p$. That
  $E^{\va^{-m}(\mathcal{A})}(\chi_F)=0$ is concluded of  the fact that
  $F\cap \varphi^{-m}(F)=\emptyset$.
  Note that for each $t\in F-(\va^m(U_{m,
  \varepsilon_1}) \cup P_{\varepsilon_1})$, we have
$$\sqrt[p]{h_{m}(t)}\ | E^{\va^{-m}(\mathcal{A})}(\prod_{i=0}^{m-1}E^{\mathcal{A}}(u)\circ\va^{i})\circ \va^{-m}
(t)f(t)|<\frac{\varepsilon_1}{1-\varepsilon_1}<\varepsilon.$$
Finally, put $V_{m, \varepsilon_1}:= F-(P_{\varepsilon_1}\cup
\varphi^{-m}(R_{m, \varepsilon_1}) \cup S_{m, \varepsilon_1}\cup
\va^m(U_{m, \varepsilon_1}))$. Then, clearly  $\mu(F- V_{m,
  \varepsilon_1})<4\varepsilon_1^p$,
  $\|(\prod_{i=0}^{m-1}E^{\mathcal{A}}(u)\circ\va^{i})^{-1}|
  _{V_{m, \varepsilon_1}}\|_{\infty}<\varepsilon$
  and
  $$\|\sqrt[p]{h_{m}}[E^{\va^{-m}(\mathcal{A})}(\prod_{i=0}^{m-1}E^{\mathcal{A}}(u)
  \circ\va^{i})]\circ \va^{-m}|_
  {V_{m, \varepsilon_1}}\|_{\infty}<\varepsilon.$$
By induction, for each $k\in \mathbb{N}$ we get a measurable subset
$V_k\subseteq F$ and an increasing subsequence $(n_k)$ such that
$\mu(F- V_{k})<4(\frac{1}{k})^p$,
$\|(\prod_{i=0}^{n_k-1}E^{\mathcal{A}}(u)\circ\va^{i})^{-1}|_{V_k}\|_{\infty}<\varepsilon$
  and $\|\sqrt[p]{h^{\mathcal{A}}_{n_k}}[E^{\va^{-n_k}(\mathcal{A})}(\prod_{i=0}^{n_k-1}
  E^{\mathcal{A}}(u)\circ\va^{i})]\circ \va^{-n_k}|_
  {V_k}\|_{\infty}<\varepsilon$.
\end{proof}

\begin{thm}\label{T11}
Let $T_u: L^{p}(\Sigma)\rightarrow L^p(\mathcal{A})$ be bounded
with $\sigma(u)=X$, and let $\varphi$ be
  a normal and finitely non-mixing
transformation provided that $\va^{-1}\mathcal{A} \subseteq
\mathcal{A}\subseteq \Sigma_\infty$ and
$\sup_n\|h_n^{\mathcal{A}\sharp}\|_{\infty}<\infty$. If for each
subset $F\in \mathcal{A}$ with $0<\mu(F)< \infty$, there exists a
sequence of $\mathcal{A}$-measurable sets $\{V_k\}\subseteq F$ such
that $\mu(V_k)\rightarrow \mu(F)$ as $k\rightarrow\infty$, and there
is a sequence of integers $(n_k)$ such that
$$\lim_{k\rightarrow \infty}\|(\prod_{i=0}^{n_k-1}E^{\mathcal{A}}(u)
\circ\va^{i})^{-1}|_{V_k}\|_{\infty}=0$$
  and $$\lim_{k\rightarrow \infty}\|\sqrt[p]{h_{n_k}}\ [\prod_{i=0}^{{n_k}-1}
  E^{\mathcal{A}}(u)\circ\va^{i}]\circ
   \va^{-n_k}|_{V_k}\|_
  {\infty}=0,$$ then $T_u$ is
  subspace-hypercyclic with respect to
$L^p(\mathcal{A})$.
\end{thm}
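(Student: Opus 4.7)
The strategy is to verify the Kitai-type subspace-hypercyclicity criterion \cite[Theorem 3.6]{mad} for $T_u$ on $M=L^p(\mathcal{A})$. The computational engine is the identity
$$T_u^n f \;=\; g_n\,f\circ\va^n,\qquad g_n:=\prod_{i=0}^{n-1} E^{\mathcal{A}}(u)\circ\va^i,\qquad f\in L^p(\mathcal{A}),$$
which I would verify by induction, using $\va^{-1}\mathcal{A}\subseteq\mathcal{A}$ to pull the $\mathcal{A}$-measurable factors $E^{\mathcal{A}}(u)\circ\va^i$ out of the conditional expectation; it also shows that $M$ is invariant under every $T_u^n$. Using separability of $L^p(\mathcal{A})$, fix a countable family $\{F_j\}\subseteq\mathcal{A}$ of finite-measure sets whose indicators span a dense subspace. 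Applying the hypothesis to each $F_j$ and passing to subsequences, arrange $\mu(F_j\setminus V_k^{(j)})<2^{-k}$; a diagonal extraction then gives a single increasing sequence $(n_k)$ with all three hypothesis quantities bounded by $1/k$ whenever $j\le k$. Replacing $V_k^{(j)}$ by the decreasing sets $\widetilde V_k^{(j)}:=\bigcap_{l\ge k}V_l^{(j)}$, whose measures are still close to $\mu(F_j)$ by summability, the three bounds hold on every $\widetilde V_k^{(j)}$ uniformly along $(n_l)_{l\ge k}$. I would take $D_1=D_2$ to be the linear span of $\{\chi_{\widetilde V_k^{(j)}}:j,k\ge 1\}$, which is dense in $L^p(\mathcal{A})$.

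For the decay condition, pick $x=\chi_{\widetilde V_{k_0}^{(j_0)}}\in D_1$. Because $\mathcal{A}\subseteq\Sigma_\infty$, the function $g_{n_k}$ is $\va^{-n_k}(\Sigma)$-measurable, so $g_{n_k}=(g_{n_k}\circ\va^{-n_k})\circ\va^{n_k}$, and the change-of-variable formula yields
$$\|T_u^{n_k}x\|_p^p \;=\; \int_{\widetilde V_{k_0}^{(j_0)}} h_{n_k}\,|g_{n_k}\circ\va^{-n_k}|^p\,d\mu \;\le\; \mu(F_{j_0})\,\bigl\|h_{n_k}^{1/p}(g_{n_k}\circ\va^{-n_k})|_{\widetilde V_{k_0}^{(j_0)}}\bigr\|_\infty^{\,p}\to 0$$
by the second hypothesis, valid because $\widetilde V_{k_0}^{(j_0)}\subseteq V_k^{(j_0)}$ for $k\ge k_0$.

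For the preimage condition, given $y=\chi_{\widetilde V_{k_0}^{(j_0)}}\in D_2$, the natural ansatz $x_k\circ\va^{n_k}=g_{n_k}^{-1}\chi_{\widetilde V_{k_0}^{(j_0)}}$ has an $\mathcal{A}$-, but not in general $\va^{-n_k}(\mathcal{A})$-, measurable right-hand side. I would therefore set $\tilde x_k:=E^{\va^{-n_k}(\mathcal{A})}\bigl(g_{n_k}^{-1}\chi_{\widetilde V_{k_0}^{(j_0)}}\bigr)$ and let $x_k$ be its $\mathcal{A}$-measurable lift through $\va^{n_k}$; this is legitimate because $\tilde x_k$ is $\va^{-n_k}(\mathcal{A})$-measurable and $\sigma(u)=X$ forces $g_{n_k}\neq 0$. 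Then $T_u^{n_k}x_k=g_{n_k}\tilde x_k$ should approximate $\chi_{\widetilde V_{k_0}^{(j_0)}}$, while a reverse change of variable exploiting the normality of $\va$ and the bound $\sup_n\|h_n^{\mathcal{A}\sharp}\|_\infty<\infty$ controls $\|x_k\|_p$ by a constant multiple of $\|g_{n_k}^{-1}|_{\widetilde V_{k_0}^{(j_0)}}\|_\infty\,\mu(F_{j_0})^{1/p}$, which tends to $0$ by the first hypothesis.

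The main obstacle is this last step: because $g_{n_k}$ is not $\va^{-n_k}(\mathcal{A})$-measurable, exact inversion inside $L^p(\mathcal{A})$ is impossible, and one must show that the projection error $g_{n_k}\,E^{\va^{-n_k}(\mathcal{A})}(g_{n_k}^{-1}\chi_{\widetilde V_{k_0}^{(j_0)}})-\chi_{\widetilde V_{k_0}^{(j_0)}}$ is negligible in $L^p$. I expect the argument to route through the companion identity from the proof of Theorem~\ref{T1}, namely $\prod_{i=0}^{n-1}E^{\mathcal{A}}(u)\circ\va^i\circ\va^{-n}=\prod_{i=1}^{n}E^{\mathcal{A}}(u)\circ\va^{-i}$ on $\sigma(h_n^{\mathcal{A}})$, which permits the error to be re-expressed and bounded by the same two sup-norms furnished by the hypothesis.
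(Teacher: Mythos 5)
Your overall route is the paper's: verify the Kitai-type subspace-hypercyclicity criterion with $D_1=D_2$ consisting of (spans of) indicator-type simple functions, using the identity $T_u^{n}f=g_n\, f\circ\va^{n}$ for $\mathcal{A}$-measurable $f$ together with the change-of-variable formula; your diagonal extraction of a single sequence $(n_k)$ is in fact a refinement that the paper's proof silently needs (there the sequence is chosen per function). The genuine gap is exactly where you flag it: the preimage condition. Having set $\tilde x_k=E^{\va^{-n_k}(\mathcal{A})}\bigl(g_{n_k}^{-1}\chi_{V}\bigr)$, you must show that $g_{n_k}E^{\va^{-n_k}(\mathcal{A})}\bigl(g_{n_k}^{-1}\chi_{V}\bigr)-\chi_{V}$ is small in $L^p$, and nothing in the hypotheses makes it so: $E^{\va^{-n_k}(\mathcal{A})}$ projects onto the shrinking algebras $\va^{-n_k}(\mathcal{A})$, and $g_{n_k}^{-1}\chi_{V}$ has no reason to be asymptotically measurable with respect to them, so the projection defect need not tend to $0$. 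The ``companion identity'' you invoke concerns $g_{n_k}\circ\va^{-n_k}$ and does not address this defect. As written, the second condition of the criterion is therefore not established.

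The idea you are missing is that no projection is needed at all. Since $g_{n_k}^{-1}f$ is $\mathcal{A}$-measurable and $\mathcal{A}\subseteq\Sigma_\infty\subseteq\va^{-n_k}(\Sigma)$, this function already factors \emph{exactly} through $\va^{n_k}$; the paper takes the factor
$$f_k=\frac{f\circ\va^{-n_k}}{\bigl[\prod_{i=0}^{n_k-1}E^{\mathcal{A}}(u)\circ\va^{i}\bigr]\circ\va^{-n_k}},$$
for which $T_u^{n_k}f_k=g_{n_k}\,(f_k\circ\va^{n_k})=f$ with no error term, and then controls
$\|f_k\|_p^p=\int_{\sigma(f)}\bigl|f/g_{n_k}\bigr|^p\,h_{n_k}^{\sharp}\,d\mu$
using $\sup_n\|h_n^{\mathcal{A}\sharp}\|_{\infty}<\infty$ together with the first displayed hypothesis. (The one point the paper itself leaves thin is why this exact factor is $\mathcal{A}$-measurable rather than merely $\Sigma$-measurable; your instinct that measurability is the delicate spot is sound, but the repair is to justify measurability of the exact factor, not to project and incur an uncontrolled error.)
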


\begin{proof}
 Since, $S^{\mathcal{A}}(X)$ is
dense in $L^p(\mathcal{A})$,  we may take
$D_1=D_2=S^{\mathcal{A}}(X)$ in the subspace-hypercyclicity's
criterion. For an arbitrary $f\in S^{\mathcal{A}}(X)$, one can
easily find $\{V_k\}\subseteq \sigma(f)$ such that
$\mu(V_k)\rightarrow \mu(\sigma(f))$ and finds an $N_1$ such that
$\sigma(f)\cap \varphi^n( \sigma(f))=\emptyset$ for each $n>N_1$.
 Now, for each $n_k>N_1$ define the vector
$f_{k}= \frac{ f\circ
\varphi^{-n_k}}{[\prod_{i=0}^{n_k-1}E^{\mathcal{A}}(u)\circ\va^{i}]\circ
\varphi^{-n_k}}$. Since $\va^{-1}\mathcal{A}\subseteq
\mathcal{A}\subseteq \Sigma_\infty$, then $f_k\in L^p(\mathcal{A})$
and the simple computations show that $T_u^{n_k}f_k=f$.
Now, we will show that   $\|T_u^{n_k} f\|_p \rightarrow 0$ and
$\|f_k \|_p \rightarrow 0$ as $k\rightarrow \infty$. For an
arbitrary $\varepsilon>0$,  there exist $M, N_1\in \mathbb{N}$,
sufficiently large such that $ V_{N_1}\subseteq\sigma(f)$ and
$$\mu(\sigma(f)-V_{N_1})<\frac{\varepsilon}{M \|f\|_{_\infty}^p}.$$ By
 Egoroff's theorem, there exists an $N_2$ such that for each $n_k>
 N_2$, \break
$\|\sqrt[p]{h^{\mathcal{A}}_{n_k}}
[\prod_{i=0}^{n_k-1}E^{\mathcal{A}}(u)\circ\va^{i}]\circ\va^{-n_k}\|^p_{\infty}
<\frac{\varepsilon}{\|f\|^p_{\infty}}$ on $V_{N_1}$. So, there
exists a non-negative real number $M$ such that
$\|\sqrt[p]{h^{\mathcal{A}}_{n_k}}[\prod_{i=0}^{n_k-1}E^{\mathcal{A}}(u)\circ\va^{i}]
\circ\va^{-n_k}\|_{\infty}^p\leq M<\infty$  on $\sigma(f)$.  Now, by
the change of variable formula, for each $n_k> N=\max\{N_1, N_2\}$
we have
\begin{align*}
  \|T_u^{n_k} f\|_p^p&=
   \int_X|\prod_{i=0}^{n_k-2}E^{\mathcal{A}}(u)\circ\va^{i}E^{\mathcal{A}}(u f\circ
\va)\circ\va^{n_k-1}|^pd\mu \\
&=\int_X|\prod_{i=0}^{n_k-1}E^{\mathcal{A}}(u)\circ\va^{i} f\circ\va^{n_k}|^pd\mu \\
&=\int_{\sigma(f)} |\prod_{i=0}^{n_k-1}E^{\mathcal{A}}(u)\circ\va^{i}\circ\va^{-n_k}f|^ph_{n_k} d\mu\\
     &= \int_{{\sigma(f)-V_N}}
    |\prod_{i=0}^{n_k-1}E^{\mathcal{A}}(u)\circ\va^{i}\circ\va^{-n_k}f|^p h_{n_k} d\mu\\
    &+ \int_{V_N}
    |\prod_{i=0}^{n_k-1}E^{\mathcal{A}}(u)\circ\va^{i}\circ\va^{-n_k}f|^p h_{n_k} d\mu\\
   &< \|\sqrt[p]{h_{n_k}} \prod_{i=0}^{n_k-1}E^{\mathcal{A}}(u)
   \circ\va^{i}\circ\va^{-n_k}\|_{\infty}^p \|f\|_{\infty}^p \ \mu(\sigma(f)-V_N)
   \\&+ \frac{\varepsilon}{\|f\|_{\infty}^p} \|f\|_{\infty}^p
<2\varepsilon.
\end{align*}
By taking into account that
$\sup_n\|h_n^{\mathcal{A}\sharp}\|_{\infty}<\infty$, we have
\begin{align*}
  \lim_{k\rightarrow \infty}\|f_k\|_p^p&=\lim_{k\rightarrow \infty}
   \int_X|\frac{f\circ\va^{-n_k}}
   {\prod_{i=0}^{n_k-1}E^{\mathcal{A}}(u)\circ\va^{i}\circ \varphi^{-n_k}}|^pd\mu \\
&=\lim_{k\rightarrow \infty}\int_{\sigma(f)}
|\frac{f}{\prod_{i=0}^{n_k-1}E^{\mathcal{A}}(u)\circ\va^{i}}|^p
h_{n_k}^\sharp d\mu\\
&\leq\sup_{k}
\|h_{n_k}^{\mathcal{A}\sharp}\|_{_\infty}(\lim_{k\rightarrow
\infty}\int_{\sigma(f)-V_N}
    |\frac{f}{\prod_{i=0}^{n_k-1}E^{\mathcal{A}}(u)\circ\va^{i}}|^p
    d\mu\\
 &+ \lim_{k\rightarrow \infty}\int_{V_N}
    |\frac{f}{\prod_{i=0}^{n_k-1}E^{\mathcal{A}}(u)\circ\va^{i}}|^p  d\mu)\\
    &=0.
\end{align*}
 Finally,  it is clear that $T_u^{n_k}L^p(\mathcal{A})\subseteq L^p(\mathcal{A})$
  for all $k\in\mathbb{N}$, because of $\va^{-1}\mathcal{A}
  \subseteq \mathcal{A}$ and hence  $T_u$ satisfies in the subspace-hypercyclicity
criterion  and  is subspace-hypercyclic.
\end{proof}
\begin{prop}\label{P1}
Suppose that $\varphi: X\rightarrow X$ is  a normal and finitely
non-mixing transformation with $\va^{-1}(\mathcal{A})\subseteq
\mathcal{A}\subseteq \Sigma_{\infty}$.
 Let $\sup_n\|h_n^\sharp\|_{\infty}<\infty$ and $\sigma(u)=X$.
 Then the following conditions are equivalent:
\begin{itemize}
  \item [(i)] $T_u$ satisfies the subspace-hypercyclic criterion.
  \item [(ii)] $T_u$ is  subspace-hypercyclic with respect to
$L^p(\mathcal{A})$.
\item [(iii)] $T_u \oplus T_u$ is subspace-hypercyclic with respect to
 $L^{p}(\mathcal{A})\oplus L^{p}(\mathcal{A})$.
\item [(iv)] $T_u$ is subspace-weakly mixing.
\end{itemize}
\end{prop}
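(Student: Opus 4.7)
The plan is to prove the cycle (i) $\Rightarrow$ (iv) $\Rightarrow$ (iii) $\Rightarrow$ (ii) $\Rightarrow$ (i). Since subspace-weak mixing is \emph{defined} as subspace-hypercyclicity of $T_u\oplus T_u$ with respect to $L^p(\mathcal{A})\oplus L^p(\mathcal{A})$, the equivalence (iv) $\Leftrightarrow$ (iii) is immediate. For (i) $\Rightarrow$ (iv) I would feed the data supplied by the criterion for $T_u$ (dense sets $D_1,D_2$ and increasing sequence $(n_k)$) into the criterion for $T_u\oplus T_u$, using $D_1\times D_1$ and $D_2\times D_2$ (dense in the product) and the same sequence: the three bullets pass coordinatewise, and invariance of $L^p(\mathcal{A})\oplus L^p(\mathcal{A})$ under $(T_u\oplus T_u)^{n_k}$ reduces to the invariance of $L^p(\mathcal{A})$ under $T_u^{n_k}$, which holds because $\va^{-1}\mathcal{A}\subseteq\mathcal{A}$. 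For (iii) $\Rightarrow$ (ii), a subspace-hypercyclic vector $(f,g)$ for $T_u\oplus T_u$ projects onto a subspace-hypercyclic vector $f$ for $T_u$, since the first coordinate of a dense subset of $L^p(\mathcal{A})\oplus L^p(\mathcal{A})$ is dense in $L^p(\mathcal{A})$.

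The substantive step is (ii) $\Rightarrow$ (i). Theorem \ref{T1} applies to the present setting (non-singular finitely non-mixing $\va$ with $\va^{-1}\mathcal{A}\subseteq\mathcal{A}$) and yields, for every $F\in\mathcal{A}$ of finite positive measure, subsets $V_k\subseteq F$ with $\mu(V_k)\to\mu(F)$ and indices $(n_k)$ realizing the two null-limits in that theorem. To invoke Theorem \ref{T11}, whose conclusion is precisely that the criterion holds on $D_1=D_2=S^{\mathcal{A}}(X)$, I would have to remove the outer conditional expectation $E^{\va^{-n_k}(\mathcal{A})}$ from the second null-limit. Because $\va^{-1}\mathcal{A}\subseteq\mathcal{A}$, each product $\prod_{i=0}^{n_k-1}E^{\mathcal{A}}(u)\circ\va^i$ is $\mathcal{A}$-measurable; combined with $\mathcal{A}\subseteq\Sigma_\infty$, the tower property and a change of variables should let me rewrite the weighted integrand in the unconditioned form on slightly shrunken $V_k$, while the hypothesis $\sup_n\|h_n^\sharp\|_\infty<\infty$ controls the constants uniformly in $k$.

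The main obstacle is exactly this last passage from $E^{\va^{-n_k}(\mathcal{A})}$-conditioned quantities to unconditioned ones: conditional expectation contracts the sup-norm only in one direction, so the implication one needs runs against the easy inequality. I expect the resolution to rely on the inclusion $\mathcal{A}\subseteq\Sigma_\infty$, the normality of $\mu$ with respect to $\va$, and a careful excision of an arbitrarily small-measure portion of each $V_k$ to absorb the outer projection; once that comparison is in place, the rest of the argument is bookkeeping between Theorems \ref{T1}, \ref{T11} and the definition of subspace-weak mixing.
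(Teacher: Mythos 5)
Your reductions (i)$\Rightarrow$(iv), (iv)$\Leftrightarrow$(iii) and (iii)$\Rightarrow$(ii) are fine, but the whole cycle hangs on (ii)$\Rightarrow$(i), and there the gap you flag is genuine and, by the route you propose, not repairable. Theorem \ref{T1} gives smallness of $\sqrt[p]{h^{\mathcal{A}}_{n_k}}\,\bigl[E^{\va^{-n_k}(\mathcal{A})}\bigl(\prod_{i=0}^{n_k-1}E^{\mathcal{A}}(u)\circ\va^{i}\bigr)\bigr]\circ\va^{-n_k}$ on $V_k$, while Theorem \ref{T11} needs the same statement with the outer $E^{\va^{-n_k}(\mathcal{A})}$ deleted. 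The product $\prod_{i=0}^{n_k-1}E^{\mathcal{A}}(u)\circ\va^{i}$ is $\mathcal{A}$-measurable, but $\va^{-n_k}(\mathcal{A})$ is a \emph{smaller} $\sigma$-algebra, so $E^{\va^{-n_k}(\mathcal{A})}$ genuinely averages this product; smallness of a conditional average does not control the function itself (it can be large on a set of substantial measure within each atom while its average is tiny), so no excision of a small-measure piece of $V_k$, and no appeal to $\mathcal{A}\subseteq\Sigma_\infty$, normality, or $\sup_n\|h_n^\sharp\|_\infty<\infty$, will invert the contraction. In short, the necessary conditions of Theorem \ref{T1} are strictly weaker than the sufficient conditions of Theorem \ref{T11}, and your plan needs them to coincide.

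The paper closes the loop differently, and this is the idea you are missing. It proves (i)$\Rightarrow$(ii) by citing that the criterion implies subspace-transitivity; it proves (ii)$\Rightarrow$(iii) by a direct transitivity construction for $T_u\oplus T_u$: given open sets $A_j,B_j$ it takes simple functions $f_j,g_j$, uses the conclusions of Theorem \ref{T1} \emph{as they stand} to build $f^\natural_{j,k}=f_j\chi_{V_k}+D_{\va}^{n_k}(g_j\chi_{V_k})$ with $D_{\va}(f)=\frac{f\circ\va^{-1}}{E^{\mathcal{A}}(u)\circ\va^{-1}}$, and checks $f^\natural_{j,k}\to f_j$ and $T_u^{n_k}f^\natural_{j,k}\to g_j$. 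The return to the criterion is (iv)$\Rightarrow$(i) via the B\`es--Peris argument: if $f\oplus g$ is subspace-hypercyclic for $T_u\oplus T_u$, then since $I\oplus T_u^{n}$ commutes with $T_u\oplus T_u$ and has dense range, one extracts from the orbit a sequence $(g_k)\to 0$ and integers $(n_k)$ with $T_u^{n_k}f\to 0$ and $T_u^{n_k}g_k\to g$, which furnishes the dense sets $D_1=D_2$ for the criterion with no quantitative conditions at all. That is precisely the device that makes it unnecessary to upgrade the conditioned limits of Theorem \ref{T1} into the unconditioned limits of Theorem \ref{T11}; without it, your argument does not close.
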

\begin{proof}
$(i)\Rightarrow (ii)$.  Note that if an operator satisfies the
subspace-hypercyclic criterion, then it is subspace-transitive and
hence is subspace-hypercyclic \cite[Theorem 3.5]{mad}. For the
implication $(ii)\Rightarrow (iii)$, we show that $T_u \oplus T_u$
is subspace-topologically transitive, according \cite[Theorem
3.3]{mad}. To begin, pick two pairs of non-empty open sets $(A_1,
B_1)$ and $(A_2, B_2)$ in $L^p(\mathcal{A})\oplus
L^{p}(\mathcal{A})$ arbitrarily. For $j=1,2$, choose the functions
$f_j, g_j\in S^{\mathcal{A}}(X)$ with $f_j\in A_j$ and $g_j\in B_j$.
Let $F=\sigma(f_1)\cup \sigma(f_2)\cup \sigma(g_1)\cup \sigma(g_2)$.
Then $\mu(F)<\infty$. Assume that $\{V_k\}\subseteq F$,
$\{(\prod_{i=0}^{n_k-1}E^{\mathcal{A}}(u)\circ\va^{i})^{-1}\}$ and
$\{\sqrt[p]{h^{\mathcal{A}}_{n_k}}\
E^{\va^{-n_k}(\mathcal{A})}(\prod_{i=0}^{n_k-1}E^{\mathcal{A}}(u)\circ\va^{i})\circ
\va^{-n_k}\}$ are as provided by Theorem \ref{T1}. There is an
$N_1\in \mathbb{N}$, such that for all $n>N_1$, $F\cap
\varphi^n(F)=\emptyset$. Moreover, for each $\varepsilon>0$ there
exists $N_2\in \mathbb{N}$, such that for each $k>N_2$ and $n_k
>N_1$,  $\|\sqrt[p]{h^{\mathcal{A}}_{n_k}}\ E^{\va^{-n_k}(\mathcal{A})}(\prod_{i=0}^{n_k-1}
E^{\mathcal{A}}(u)\circ\va^{i})\circ \va^{-n_k}|_
  {V_k}\|_{\infty}^p< \frac{\varepsilon}{\|f_j\|_p^p}$ on
$V_k$. Hence, for $k>N_2$, we get that
\begin{align*}
\|T_u^{n_k}(f_j \chi_{V_k})\|_p^p&=
\int_X|T_u^{n_k} (f_j \chi_{V_k})|^pd\mu \\
&=\int_{X} |\prod_{i=0}^{n_k-1}E^{\mathcal{A}}(u)\circ\va^{i} (f_j\chi_{V_k})\circ
\varphi^{n_k})|^p \ d\mu\\
&= \int_{V_k}
|[\prod_{i=0}^{n_k-1}E^{\mathcal{A}}(u)\circ\va^{i}]\circ\va^{-n_k}
f_j|^p h_{n_k} d\mu <\varepsilon.
\end{align*}
Now, define a map $D_{\varphi}(f)= \frac{f\circ
\varphi^{-1}}{E^{\mathcal{A}}(u)\circ \varphi^{-1}}$ on the subspace
$S^{\mathcal{A}}(X)$. Then for each $f\in S^{\mathcal{A}}(X)$,
$T_u^{n_k}D_{\varphi}^{n_k}(f)= f$. Again, we may find an $N_3\in
\mathbb{N}$ such that for each $k>N_3$ and $n_k
>N_1$,  $\|(\prod_{i=0}^{n_k-1}E^{\mathcal{A}}(u)\circ\va^{i})^{-1}\|_{\infty}^p<
 \frac{\varepsilon}{M\|g_j\|_\infty^p}$ on
$V_k$, where $M=\sup_n\|h_n^{\mathcal{A}\sharp}\|_{\infty}<\infty$.
On the other hand, for each $k>N_3$ note that
\begin{align*}\label{E2}
\|D_{\varphi}^{n_k}(g_j\chi_{V_k})\|_p^p&=
\int_{\va^{n_k}(V_k)}|\frac{g_j\circ\va^{-n_k}}{[\prod_{i=0}^{n_k-1}E^{\mathcal{A}}(u)
\circ\va^{i}]\circ\va^{-n_k}}|^pd\mu \\
&=\int_{V_k}|\frac{g_j}{\prod_{i=0}^{n_k-1}E^{\mathcal{A}}(u)\circ\va^{i}}|^p\ h_n^\sharp
d\mu <\varepsilon.
\end{align*}
For each $k\in \mathbb{N}$, let $f^\natural_{j,k}= f_j \chi_{V_k}+
D_{\varphi}^{n_k}(g_j\chi_{V_k})$. Then we have $f^\natural_{j,k}\in
L^{p}(\mathcal{A})$,
$$\|f^\natural_{j,k}- f_j\|_p^p\leq \|f_j\|_\infty^p\ \mu(F-V_k)+
 \|D_{\varphi}^{n_k}(g_j\chi_{V_k})\|_p^p$$
and
$$\|T_u^{n_k}f^\natural_{j,k}- g_j\|_p^p\leq \|g_j\|_\infty^p\
\mu(F-V_k)+
 \|T_u^{n_k}(f_j \chi_{V_k})\|_p^p.$$
Hence,  $\lim_{k\rightarrow \infty}f^\natural_{j,k}=f_j$,
$\lim_{k\rightarrow \infty}T_u^{n_k}f^\natural_{j,k}=g_j$ and
$T_u^{n_k}(A_j)\cap B_j\neq \emptyset$ for some $k\in \mathbb{N}$.
Moreover, since $\va^{-1}(\mathcal{A})\subseteq \mathcal{A}$ then
$T_u^{n_k}(L^{p}(\mathcal{A}))\subseteq L^{p}(\mathcal{A})$. So $T_u
\oplus T_u$ is subspace-hypercyclic on $L^{p}(\mathcal{A})\oplus
L^{p}(\mathcal{A})$.

 To prove the
implication $(iv)\Rightarrow (i)$, we use B$\grave{e}$s-Peris's
approach stated in \cite[Theorem 4.2]{bay}. Assume that $T_u \oplus
T_u$ is subspace-hypercyclic on $L^{p}(\mathcal{A})\oplus
L^{p}(\mathcal{A})$ with subspace-hypercyclic vector $f \oplus g$.
Note that for each $n\in \mathbb{N}$, the operator $I \oplus T_u^n$
has dense range and commutes with $T_u \oplus T_u$, therefore $orb(I
\oplus T_u^n, f \oplus g)=(I \oplus T_u^n)orb(T_u \oplus T_u, f
\oplus g)$. Eventually $f \oplus T_u^ng$ is subspace-hypercyclic
vector as well. We show that the subspace-hypercyclic criterion is
satisfied by $D_1=D_2=orb(T_u \oplus T_u, f \oplus g)$. Let $U$ be
an arbitrary open neighborhood of $0$ in $L^{p}(\mathcal{A})$.
Hence, one can find a sequence $(g_k)\subseteq U$ and an increasing
sequence of integers $(n_k)$ such that $T_u^{n_k}f\oplus
T_u^{n_k}g_k\rightarrow 0\oplus g$ and $g_k\rightarrow 0$. Clearly,
$T_u^{n_k}(L^{p}(\mathcal{A}))\subseteq L^{p}(\mathcal{A})$.
\end{proof}
\begin{cor}\label{C1}
Under the assumptions of Proposition \ref{P1},
the following conditions are equivalent:
\begin{itemize}
  \item [(i)] $T_u$ is subspace-topologically mixing on $L^p(\mathcal{A})$.
  \item [(ii)] For each $\mathcal{A}$-measurable subset $F\subseteq X$ with
   $0<\mu(F)<\infty$,
  there exists a sequence of $\mathcal{A}$-measurable sets
   $\{V_n\}\subseteq F$
  such that $\mu(V_n)\rightarrow \mu(F)$ as $n\rightarrow
  \infty$ and
     $\lim_{n\rightarrow \infty}\|(\prod_{i=0}^{n-1}E^{\mathcal{A}}(u)\circ\va^{i})^{-1}|
  _{V_n}\|_{\infty}=\lim_{n\rightarrow \infty}
  \|\sqrt[p]{h^{\mathcal{A}}_{n}}( \prod_{i=0}^{n-1}E^{\mathcal{A}}(u)\circ\va^{i}\circ \varphi^{-{n}})|_
  {V_n}\|_{\infty}=0$.
\end{itemize}
\end{cor}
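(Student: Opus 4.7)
My plan is to reuse the mechanisms from the proofs of Theorem \ref{T1} and Theorem \ref{T11} (together with the $(ii)\Rightarrow(iii)$ argument inside Proposition \ref{P1}), the only modification being that the subsequence $(n_k)$ must be upgraded to every sufficiently large integer $n$ --- this is precisely what distinguishes topological mixing from transitivity.

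For the implication $(i)\Rightarrow(ii)$, I would fix $F\in\mathcal{A}$ with $0<\mu(F)<\infty$ and an arbitrary $\varepsilon>0$, and choose $\varepsilon_1\in(0,\varepsilon/(1+\varepsilon))$. Because $T_u$ is subspace-topologically mixing, there exists $N=N(\varepsilon_1)$ such that for every $n\geq N$ one can find $f_n\in L^p(\mathcal{A})$ with $\|f_n-\chi_F\|_p<\varepsilon_1^2$ and $\|T_u^n f_n-\chi_F\|_p<\varepsilon_1^2$. For each such fixed $n$, the measure-theoretic bookkeeping in the proof of Theorem \ref{T1} --- introducing the sets $P_{\varepsilon_1}$, $R_{\varepsilon_1}$, $S_{n,\varepsilon_1}$, $U_{n,\varepsilon_1}$, applying the change of variables, and invoking $F\cap\varphi^n(F)=\emptyset$ --- applies to $f_n$ verbatim with $m$ replaced by $n$, producing $V_n\subseteq F$ with $\mu(F-V_n)<4\varepsilon_1^p$ and both required sup-norms bounded by $\varepsilon$. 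Running this construction with $\varepsilon=1/k$ and patching by a diagonal argument yields one sequence $\{V_n\}$ indexed by \emph{all} large $n$ along which both sup-norms tend to zero.

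For $(ii)\Rightarrow(i)$, I would pick arbitrary non-empty open sets $A, B\subseteq L^p(\mathcal{A})$, take $f, g\in S^{\mathcal{A}}(X)$ with $f\in A$ and $g\in B$, set $F=\sigma(f)\cup\sigma(g)$, and apply $(ii)$ to this $F$. Using the right-inverse $D_{\varphi}(h)=(h\circ\varphi^{-1})/(E^{\mathcal{A}}(u)\circ\varphi^{-1})$ introduced in Proposition \ref{P1}, I would set $f^{\natural}_n:=f\chi_{V_n}+D_{\varphi}^n(g\chi_{V_n})$ and verify, by the very same change-of-variable estimates, that for every $n$ beyond the threshold at which $F\cap\varphi^n(F)=\emptyset$ kicks in, both $\|T_u^n(f\chi_{V_n})\|_p$ and $\|D_{\varphi}^n(g\chi_{V_n})\|_p$ become as small as desired. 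Consequently $f^{\natural}_n\to f$ and $T_u^n f^{\natural}_n\to g$ along the full sequence, so $T_u^n(A)\cap B\neq\emptyset$ for every sufficiently large $n$, which is exactly subspace-topological mixing once we recall that $\varphi^{-1}\mathcal{A}\subseteq\mathcal{A}$ forces $T_u^n(L^p(\mathcal{A}))\subseteq L^p(\mathcal{A})$.

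The hard part, I expect, will be the diagonalisation in $(i)\Rightarrow(ii)$: the per-$\varepsilon$ construction only delivers $V_n$ for $n\geq N(\varepsilon)$, and the resulting families for $\varepsilon=1/k$ must be glued into one sequence indexed by \emph{all} sufficiently large $n$ on which both sup-norms genuinely converge to zero. The topological mixing hypothesis is indispensable at this step, because, unlike in the purely hypercyclic setting of Theorem \ref{T1}, one needs the two-sided approximation $\|f_n-\chi_F\|_p,\,\|T_u^n f_n-\chi_F\|_p<\varepsilon_1^2$ to hold for every large $n$ and not merely along a subsequence --- this is precisely the extra strength that promotes Theorem \ref{T1} into Corollary \ref{C1}.
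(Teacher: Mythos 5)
Your proposal follows essentially the same route as the paper: for $(i)\Rightarrow(ii)$ it uses topological mixing to obtain, for \emph{every} large $n$, a vector $f_n$ close to $\chi_F$ with $T_u^n f_n$ close to $\chi_F$ and then runs the bookkeeping of Theorem \ref{T1}; for $(ii)\Rightarrow(i)$ it reruns the transitivity argument of Proposition \ref{P1} with the full sequence in place of a subsequence. The diagonalisation over $\varepsilon=1/k$ that you single out is indeed needed and is left implicit in the paper's one-line conclusion, but it is routine and your treatment of it is correct.
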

\begin{proof}
By Theorem \ref{T11} and Proposition \ref{P1} the implication
$(ii)\Rightarrow(i)$ is established, just use the full sequences
instead of subsequences.  For the implication $(i)\Rightarrow(ii)$,
let $\varepsilon>0$ and $F\in \mathcal{A}$ with $0<\mu(F)<\infty$ be
arbitrary. Consider a non-empty and open subset $U=\{f\in
L^{p}(\mathcal{A}): \|f- \chi_F\|_p<\varepsilon\}$. Since $T_u$ is
subspace-topologically mixing and $\varphi$ is finitely non-mixing,
one may find $N\in \mathbb{N}$ such that for all $n>N$,
$T_u^{n}(U)\cap U\neq \emptyset$ and $F\cap \varphi^n(F)=\emptyset$.
Hence, for each $n>N$, we can choose a function $f_n \in U$ such
that $T_u^{n}f_n \in U$. Then $\|f_n- \chi_F\|_p<\varepsilon$ and
$\|T_u^{n}f_n- \chi_F\|_p<\varepsilon$. The rest of the proof can be
proceed like as Theorem \ref{T1}.
\end{proof}

\begin{exam}
Let $X=\mathbb{R}$ be the  real line with  Lebesgue measure $\mu$ on
the $\sigma$-algebra $\Sigma$ of all Lebesgue measurable subsets of
$\mathbb{R}$. Let $\mathcal{A}$ be the $\sigma$-subalgebra generated
by the symmetric intervals about the origin.
  For a
positive real number $t$ define the transformation $\varphi:
\mathbb{R}\rightarrow \mathbb{R}$ by  $\varphi(x)=x+t, ~~x\in
\mathbb{R}$. Clearly, $\varphi^{-1}\mathcal{A}\subseteq
\mathcal{A}\subseteq \Sigma_{\infty}$ and in this setting,
$E^{\mathcal{A}}(f)=\frac{f(x)+f(-x)}{2}$, which is the even part of
 $f\in L^p(\Sigma)$.
 Fix $r>1$ and define the weight function $u$ on
$\mathbb{R}$ by
$$u(x)=\left\{
  \begin{array}{ll}
    2 x+r, & {1\leq x}, \\
       -x^2 -\frac{x}{2}+2, & {-1< x < 1},\\
       x^3+ \frac{1}{r}, & {x\leq -1}.
  \end{array}
\right.$$ Then, we have
$$E^{\mathcal{A}}(u)(x)=\left\{\begin{array}{lll}
r,&1 \leq x,\\ -\frac{x^2}{2}+2, &-1< x < 1,\\
\frac{1}{r},&x \leq -1.
\end{array}\right.$$
 For an arbitrary $F=[-a, a]$, take
$V_k=(-a+\frac{1}{k}, a-\frac{1}{k})$.
    In this case, one may easily find a sequence $(n_k)$ such that both quantities
$\|(\prod_{i=0}^{n_k-1}E^{\mathcal{A}}(u)\circ\va^{i})^{-1}|_{V_k}\|_{\infty}$
  and $\|\sqrt[p]{h^{\mathcal{A}}_{n_k}}\ [\prod_{i=0}^{{n_k}-1}
  E^{\mathcal{A}}(u)\circ\va^{i}]\circ
   \va^{-n_k}|_{V_k}\|_{\infty}$ tend zero as $k\rightarrow \infty$.
Because, $h^{\mathcal{A}}_{n_k}=h^{\mathcal{A}^\sharp}_{n_k}=1$ and
$[\prod_{i=0}^{{n_k}-1}E^{\mathcal{A}}(u)\circ\va^{i}]\circ
   \va^{-n_k}=\prod_{i=1}^{{n_k}}E^{\mathcal{A}}(u)\circ\va^{-i}$,
   since $\varphi$ is onto (or $\sigma(h^{\mathcal{A}}_{n_k})=\mathbb{R}$).
    Therefore, by Theorem \ref{T11}, $T_u$ is
  subspace-hypercyclic with respect to
$L^p(\mathcal{A})$ while it is not hypercyclic on $L^p(\Sigma)$
\cite[Theorem 2.3]{az2}. For this, just consider that
$\|\sqrt[p]{h_{n_k}}[E_{n_k}( \prod_{i=0}^{{n_k}-1}
  u\circ\va^{i})]\circ
   \va^{-n_k}|_{V_k}\|_{\infty}=\| \prod_{i=1}^{{n_k}}
  u\circ\va^{-i}|_{V_k}\|_{\infty}\nrightarrow 0$.
\end{exam}

\end{document}